\documentclass{amsart}

\usepackage{fullpage, amsmath, amsthm, amssymb, graphicx, color}
\usepackage{ulem}

\theoremstyle{plain}% default
\newtheorem{theorem}{Theorem}[section]
\newtheorem{lemma}[theorem]{Lemma}

\newtheorem{corollary}[theorem]{Corollary}

\newcommand{\Sum}[3]{\displaystyle\sum_{#1=#2}^{#3}}
\newcommand{\Prod}[3]{\displaystyle\prod_{#1=#2}^{#3}}

\newtheorem{definition}[theorem]{Definition} %[section]
 %[section]
 %[section]

\theoremstyle{remark}
\newtheorem*{remark}{Remark}

\newcommand{\leg}[2]{{\left(\frac{#1}{#2}\right)}}

\author{Matthew S. Mizuhara}
\author{James A. Sellers}
\author{Holly Swisher}

\title{A periodic approach to plane partition congruences }

\thanks{The first and third authors were supported in part by the NSF grant DMS-0852030. }

\subjclass[2010]{Primary 11P83}

\begin{document}

\maketitle

%%Rewrite Abstract%%
\begin{abstract}
Ramanujan's celebrated congruences of the partition function $p(n)$ have inspired a vast amount of results on various partition functions.  Kwong's work on periodicity of rational polynomial functions yields a general theorem used to establish congruences for restricted plane partitions.  This theorem provides a novel proof of several classical congruences and establishes two new congruences. We additionally prove several new congruences which do not fit the scope of the theorem, using only elementary techniques, or a relationship to existing multipartition congruences.
\end{abstract}

\section{Introduction}

The subject of partitions has a long and fascinating history, including connections to many areas of mathematics and mathematical physics.  For example, \cite{A-O-Notices} and \cite{And98} provide a nice overview of the history and theory of partitions.  The generalization of partitions to $k$-component plane partitions has been a rich subject in its own right (see \cite{Stanley} for a nice survey).  We first review partitions and plane partitions, introducing all necessary definitions.
 
\subsection{Partitions and Plane Partitions}\label{planepartitions}
A {\it{partition}} of a positive integer $n$ is defined to be a nonincreasing sequence of positive integers, called {\it{parts}}, that sum to $n$ (often written as a sum).  For $n=0$ we consider the empty set to be the unique ``empty partition" of $0$.  We write  $|\lambda|=n$ to denote that $\lambda$ is a partition of $n$.  For example, the following gives all the partitions $\lambda$, such that $|\lambda|=5$:
\[
5 = 4+1 = 3+2 = 3+1+1 = 2+2+1 = 2+1+1+1= 1+1+1+1+1.
\]
The {\it{partition function}} $p(n)$ counts the total number of partitions of $n$.  In order to define $p(n)$ on all integers we define $p(n)=0$ for $n<0$.  We see from our example above that $p(5)=7$.

To each partition $\lambda$ of $n$, with parts $\lambda_1\geq \ldots \geq \lambda_r$, we associate a {\it Ferrers diagram} by constructing a left-justified array of $n$ cells, where the $i$th row from the top contains $\lambda_i$ cells corresponding to the $i$th part of $\lambda$.  For example, the following is the Ferrers diagram for the partition $2+2+1$ of $5$.
\[
\includegraphics[scale=0.5]{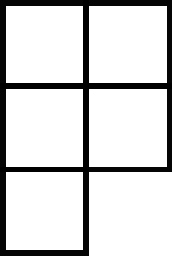}
\]

The generating function for $p(n)$ has the following infinite product form, due to Euler:
\[
\sum_{n=0}^\infty p(n)q^n=\prod_{n=1}^\infty\frac{1}{(1-q^n)}.
\]

One can also consider partitions where the parts are restricted to a specific set $S$ of integers.  These are called {\it restricted partitions}.  Given a set of integers $S$, define the restricted partition function $p(n;S)$ to be the number of partitions of $n$ with parts from $S$.  For example, let $S$ be the set of odd integers.  Then $p(5;S)=3$ since the partitions of $5$ into only odd parts are
\[
5 = 3+1+1 = 1+1+1+1+1.
\]
Generalizing slightly, we allow $S$ to be a multiset where repetition of an integer corresponds to a distinct coloring of the integer (where coloring also equips an ordering among like integers).  For example, let $S=\{1,{\bf 2},2, 5\}$ where ${\bf 2}>2$. Then $p(5;S)=7$ since
\[
5 ={\bf 2}+ {\bf 2}+1={\bf 2}+2+1 = 2+2+1= {\bf 2}+1+1+1 = 2+1+1+1  = 1+1+1+1+1
\]
are all partitions formed from $S$.

Partitions can  be generalized in a number of natural ways. A {\it{plane partition}} $\lambda$ of a positive integer $n$ is a two-dimensional array of positive integers $n_{i,j}$ that sum to $n$, such that the array is the Ferrers diagram of a partition, and the entries are non-increasing from left to right and also from top to bottom.  Letting $i$ denote the row and $j$ the column of $n_{i,j}$, this means that for all $i,j\geq 0$, 
\[
n_{i,j}\geq n_{i+1,j} \mbox{ and } n_{i,j} \geq n_{i, j+1}.
\]
Correspondingly, the entries $n_{i,j}$ are called the {\it parts} of $\lambda$.   

If a plane partition $\lambda$ has largest part $\leq k$, we say it is a {\it{$k$-component plane partition}}. We can visualize $k$-component plane partitions by letting each part represent a height so that $\lambda$ is a $k$-tuple of partitions ``stacked'' one atop the other with heights monotonically increasing up to $n_{1,1}$ in the top-left corner. In this sense, we can think of a $k$-component plane partition as $k$-tuple of partitions (obeying certain rules), where we note that the last partitions in the tuple may be empty sets. As an example, the following is a $5$-component plane partition of $28$ corresponding to the $5$-tuple of partitions $(4+3+1+1, 4+3+1, 4+1, 4, 2)$.

\[
\includegraphics[scale=0.5]{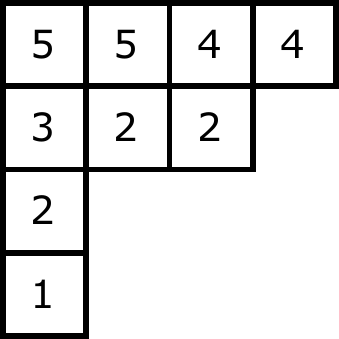}
\]

Let $pl_k(n)$ count the number of $k$-component plane partitions of $n$.  As above, we define $pl_k(n)=0$ for $n<0$, and consider the empty set as the unique $k$-component plane partition of $0$.  Then, the generating function for $pl_k(n)$ has the following infinite product form \cite{And98}:

\begin{equation}\label{PL}
PL_k(q)=\sum_{n=0}^{\infty}pl_k(n)q^n=\prod_{n=1}^{\infty}\frac{1}{(1-q^n)^{\rm{min}(k,\,n)}} = F_k(q) \prod_{n=k}^\infty \frac{1}{(1-q^n)^k},
\end{equation}
where for any positive integer $k\geq 2$, we define $F_k(q)$ by
\[
F_k(q):=\Prod{n}{1}{k-1}\frac{1}{(1-q^n)^n},
\]
and $F_1(q):=1$.
\subsection{Plane Partition Congruences}\label{pcongruences}

Tremendous amounts of work have been done on the study of Ramanujan-type congruences for partitions and generalized partitions of various kinds, using combinatorial methods, q-series analysis, and modular forms.  For example, see \cite{A-B}, \cite{A-O}, \cite{AND08}, \cite{Atkin}, \cite{Boylan}, \cite{GAN67}, \cite{KIM92}, \cite{LSY}, \cite{LMRS},   \cite{Newman}, \cite{Ono} to name a few. 

In this work, using results of Kwong \cite{KWO89} on periodicity of rational functions, we consider the class of plane partition congruences of the form
\begin{equation}\label{congform}
\sum_{i=1}^{s}pl_{k}(\ell n+a_i)\equiv \sum_{j=1}^{t}pl_{k}(\ell n+b_j) \pmod {\ell}, \;\;\; \mbox{for all $n\geq 0$.}
\end{equation}

\begin{definition}
Let $A(q) = \sum_{n\geq 0}\alpha_n q^n \in \mathbb{Z}[[ q ]]$ be a formal power series with integer coefficients, and let $d,\ell$ be positive integers. We say $A(q)$ is {\it periodic with period $d$ modulo $\ell$} if, for all  $n\geq 0$,
\[
\alpha_{n+d}\equiv \alpha_{n} \pmod \ell.
\]
The smallest such period, denoted $\pi_m(A)$, is called the minimal period of $A(q)$ modulo $m$. 
\end{definition}

We prove the following theorem.  

%Specifically, we study the sequence of $pl_{\ell}(n)$, where $\ell$ is a prime. We define $\pi(k)$ be the length of the minimum period of the coefficients of $\frac{1}{(1-q)(1-q^2)^2\dots(1-q^{p-1})^{p-1}}$ modulo $p$. It will be shown that this is well-defined. The purpose of this paper is to prove the following:

\begin{theorem}\label{congtheorem} 
Fix positive integers $s,t$ and nonnegative integers $a_i,\,b_j$ for each $1\leq i\leq s, 1\leq j \leq t$.  For a prime $\ell$, if 
\begin{equation}\label{congeqn}
\sum_{i=1}^{s}pl_{\ell}(\ell n+a_i)\equiv \sum_{j=1}^{t}pl_{\ell}(\ell n+b_j) \pmod {\ell}
\end{equation}
holds for all $n< \frac{\pi_{\ell}(F_\ell)}{\ell}$, then it holds for all $n\geq 0$.
\end{theorem}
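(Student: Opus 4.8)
The plan is to reduce everything modulo $\ell$ to an assertion about a single eventually–periodic sequence, and then to exploit that a certain power series is invertible modulo $\ell$. Since $\ell$ is prime, $(1-q^{n})^{\ell}\equiv 1-q^{\ell n}\pmod{\ell}$, so the second product in \eqref{PL} becomes a series in $q^{\ell}$ and
\[
PL_\ell(q)\;\equiv\; F_\ell(q)\,G\bigl(q^{\ell}\bigr)\pmod{\ell},\qquad
G(q):=\prod_{n\geq \ell}\frac{1}{1-q^{n}}=\sum_{k\geq 0}g_k q^{k},\quad g_0=1 .
\]
By the periodicity results recalled above (Kwong's theorem applied to the rational function $F_\ell$), the coefficients $f_m$ of $F_\ell(q)=\sum_{m\geq 0}f_m q^{m}$ satisfy $f_{m+\pi}\equiv f_m\pmod{\ell}$, where $\pi:=\pi_\ell(F_\ell)$; I will use the fact (part of that period computation) that $\ell\mid\pi$, and set $\pi':=\pi/\ell$.

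Next I would isolate the arithmetic progressions appearing in \eqref{congeqn}. Multiplying out $F_\ell(q)\,G(q^{\ell})$ and reading off the coefficient of $q^{\ell n+a}$ gives, writing
\[
C(n):=\sum_{i=1}^{s}pl_\ell(\ell n+a_i)-\sum_{j=1}^{t}pl_\ell(\ell n+b_j),\qquad
D(m):=\sum_{i=1}^{s}f_{\ell m+a_i}-\sum_{j=1}^{t}f_{\ell m+b_j}
\]
(with $f_i:=0$ for $i<0$), the convolution identity $C(n)\equiv\sum_{k\geq 0}g_k D(n-k)\pmod{\ell}$. Assume first $0\leq a_i,b_j<\ell$, so that $D(m)=0$ for $m<0$; then $D(m+\pi')=\sum_i f_{\ell m+a_i+\pi}-\sum_j f_{\ell m+b_j+\pi}\equiv D(m)$ for all $m\geq 0$, i.e.\ $(D(m))_{m\geq 0}$ is periodic modulo $\ell$ with period dividing $\pi'$. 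Hence $\sum_{m\geq 0}D(m)q^{m}\equiv P(q)/(1-q^{\pi'})\pmod{\ell}$ for the polynomial $P(q):=\sum_{k=0}^{\pi'-1}D(k)q^{k}$ of degree $<\pi'$, and therefore
\[
\sum_{n\geq 0}C(n)q^{n}\;\equiv\; \mathcal G(q)\,P(q)\pmod{\ell},\qquad \mathcal G(q):=\frac{G(q)}{1-q^{\pi'}},
\]
where $\mathcal G$ has constant term $1$ and so is invertible modulo $\ell$. (If some $a_i$ or $b_j$ is $\geq\ell$ one first pulls out the carry $\lfloor a_i/\ell\rfloor$; this only multiplies $\sum_n C(n)q^n$ by a fixed power of $q$, so the same conclusion holds after a harmless shift, $D$ now also being supported on finitely many negative indices.)

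Finally I would feed in the hypothesis $C(0)\equiv\cdots\equiv C(\pi'-1)\equiv 0\pmod{\ell}$. Because $g_0=1$, the identity $C(n)\equiv D(n)+\sum_{k=1}^{n}g_k D(n-k)$ is lower triangular in the $D$'s, so induction on $n$ starting from $D(0)\equiv C(0)$ yields $D(0)\equiv D(1)\equiv\cdots\equiv D(\pi'-1)\equiv 0\pmod{\ell}$, i.e.\ $P\equiv 0\pmod{\ell}$. Since $\mathcal G$ is a unit modulo $\ell$, $\sum_{n\geq 0}C(n)q^{n}\equiv\mathcal G(q)P(q)\equiv 0\pmod{\ell}$, so $C(n)\equiv 0$ for every $n\geq 0$, which is \eqref{congeqn}. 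The step I expect to be the real obstacle is the input from the periodicity section: one needs not only that $F_\ell$ is periodic modulo $\ell$, but that $\ell$ divides its minimal period $\pi_\ell(F_\ell)$, so that the $\ell$-dissected sequences have period dividing $\pi_\ell(F_\ell)/\ell$. Once that is in hand, the rest is the formal bookkeeping above.
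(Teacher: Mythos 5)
Your argument is essentially the paper's proof in different packaging: you reduce $PL_\ell$ modulo $\ell$ to $F_\ell(q)G(q^\ell)$ via Lemma \ref{equivlemma}, dissect along residue classes, use $g_0=1$ (the paper's $\beta_0=1$) to turn the hypothesis, by a triangular induction, into the vanishing modulo $\ell$ of $D(0),\dots,D(\pi'-1)$, and then invoke Kwong's periodicity of $F_\ell$ together with $\ell\mid\pi_\ell(F_\ell)$ to propagate to all $n$. The only real difference is the last step: you phrase the propagation as ``$P\equiv 0$ times a unit power series,'' whereas the paper writes $\pi_\ell(F_\ell)=K\ell$, applies the division algorithm to $n$, and uses \eqref{periodicity} directly; these are equivalent, and your generating-function formulation is a perfectly clean alternative.

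Two specific claims in your write-up do need repair. First, ``$\ell\mid\pi_\ell(F_\ell)$'' is false for $\ell=2$: by \eqref{periods}, $\pi_2(F_2)=1$, so your $\pi'=\pi/\ell$ is not an integer and the step $\sum_m D(m)q^m\equiv P(q)/(1-q^{\pi'})$ is meaningless there. The paper treats $\ell=2$ separately by observing that all coefficients of $F_2$ are congruent modulo $2$, so the dissected sequence $D$ is constant and the single case $n=0$ of the hypothesis suffices; you need the analogous remark. Second, your parenthetical reduction of the case $a_i\geq\ell$ (``pulls out the carry \dots harmless shift'') does not work: different $a_i,b_j$ carry by different amounts, $D$ then acquires support at negative indices, and the triangular induction no longer converts $C(0)\equiv\cdots\equiv C(\pi'-1)\equiv 0$ into $P\equiv 0$. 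In fact no argument can rescue that case: take $\ell=2$, $s=t=1$, $a_1=0$, $b_1=2$; the hypothesis (only $n=0$) holds since $pl_2(0)=1\equiv pl_2(2)=3\pmod 2$, yet the conclusion fails at $n=1$ since $pl_2(2)=3\not\equiv pl_2(4)=10\pmod 2$. The paper's own proof quietly assumes $0\leq a_i,b_j<\ell$, and your argument is correct exactly in that regime (with the $\ell=2$ patch above); you should impose the same restriction rather than claim the general reduction.
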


Congruences of this type were studied by Gandhi in \cite{GAN67}, and proved using relationships between $pl_k(n)$ and unrestricted multipartition functions, for which many results were known. Although we prove two plane partition congruences by relating them to multipartition congruences in Section \ref{mod5section}, the approach in Theorem \ref{congtheorem} does not rely on previous results of multipartition congruences, but rather on periodicity of plane partition generating functions modulo prime numbers.  The utility of Theorem \ref{congtheorem} is that it provides an upper bound on the number of calculations required to confirm these types of congruences.

In the next section, we introduce necessary preliminaries and discuss the periodicity properties of $PL_k(q)$.  In section \ref{proof} we prove Theorem \ref{congtheorem} and list all confirmed congruences arising from its application. We conclude by proving several other plane partition congruences which do not fall under the scope of Theorem \ref{congtheorem}, some using elementary techniques.

\section{Preliminaries}

In order to state a theorem of Kwong \cite{KWOPart}, we need the following definitions.

\begin{definition}
For an integer $n$ and prime $\ell$, define $\rm{ord}_\ell(n)$ to be the unique nonnegative integer such that $\ell^{\rm{ord}_\ell(n)}\cdot m =n$, where $m$ is an integer and $\ell\nmid m$.  In addition, we call $m$ the $\ell$-free part of $n$.   
\end{definition}

\begin{definition}
Fix a prime $\ell$.  For a finite multiset of positive integers $S$, we define $m_\ell(S)$ to be the $\ell$-free part of lcm$\{n \mid n\in S\}$, and $b_\ell(S)$ to be the least nonnegative integer such that
\[
\ell^{b_\ell(S)}\geq \sum_{n\in S}\ell^{\rm{ord}_\ell(n)}.
\]
When the context is clear we will write $m(S) = m_\ell(S)$, and $b(S) = b_\ell(S)$ for simplicity.
\end{definition}

\begin{theorem}[Kwong \cite{KWOPart}]\label{kwong}
Fix a prime $\ell$, and a finite multiset $S$ of positive integers.  Then, for any positive integer $N$, $A(q)=\sum_{n\geq 0}p(n;S)q^n$ is periodic modulo ${\ell^N}$, with minimal period $\pi_{\ell^N}(A)=\ell^{N+b(S)-1}m(S)$.
\end{theorem}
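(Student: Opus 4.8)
The plan is to use the standard correspondence between periodicity of the coefficients of a rational power series and divisibility of its denominator into $1-q^{D}$. Since $A(q)=\prod_{s\in S}(1-q^{s})^{-1}$ has numerator $1$, set $Q(q)=\prod_{s\in S}(1-q^{s})\in\mathbb{Z}[q]$, whose constant term is a unit modulo $\ell^{N}$. First I would prove the elementary lemma that, for $D\ge 1$, the coefficient sequence of $A(q)$ is purely periodic modulo $\ell^{N}$ with period dividing $D$ if and only if $Q(q)\mid 1-q^{D}$ in $(\mathbb{Z}/\ell^{N}\mathbb{Z})[q]$. One direction is immediate, since $(1-q^{D})A(q)=\sum_{n}\bigl(p(n;S)-p(n-D;S)\bigr)q^{n}$, so period $D$ forces this to be a polynomial of degree $<D$, i.e.\ $1-q^{D}\equiv Q(q)\cdot(\text{poly})$; conversely, if $Q\mid 1-q^{D}$ then $(1-q^{D})A(q)$ is a genuine polynomial of degree $D-\deg Q<D$, giving pure periodicity. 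Thus $\pi_{\ell^{N}}(A)$ is exactly the least $D$ with $Q(q)\mid 1-q^{D}\pmod{\ell^{N}}$, and the whole theorem reduces to computing this $D$.

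The engine for that computation is the congruence
\[
(1-q^{w})^{\ell^{a}}\equiv (1-q^{w\ell})^{\ell^{a-1}}\pmod{\ell^{a}},\qquad a\ge 1,\ w\ge 1,
\]
which I would prove by induction on $a$: raising the $a$th case to the $\ell$th power and expanding binomially, every cross term carries a factor $\ell^{aj}$ with $j\ge 1$ and hence is divisible by $\ell^{a+1}$. Specializing $a=1$ and iterating gives the mod-$\ell$ identity $1-q^{s}\equiv (1-q^{u_{s}})^{\ell^{e_{s}}}\pmod{\ell}$, where $s=\ell^{e_{s}}u_{s}$ with $u_{s}$ the $\ell$-free part of $s$ and $e_{s}=\mathrm{ord}_{\ell}(s)$. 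I would first settle $N=1$. Over the field $\mathbb{F}_{\ell}$, $1-q^{D}$ is a product of distinct cyclotomic factors when $\ell\nmid D$, so writing $D=\ell^{f}v$ with $\gcd(v,\ell)=1$ and using unique factorization, $Q(q)\mid 1-q^{D}$ modulo $\ell$ amounts to two conditions: every $\ell$-free part $u_{s}$ must divide $v$, i.e.\ $m(S)=\mathrm{lcm}\{u_{s}\}\mid v$; and for every irreducible factor, its multiplicity in $Q$ must not exceed $\ell^{f}$. That multiplicity is maximized at the factor $1-q$, where it equals $\sum_{s\in S}\ell^{e_{s}}$, so the second condition is $\ell^{f}\ge\sum_{s\in S}\ell^{e_{s}}$, i.e.\ $f\ge b(S)$. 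Hence the minimal $D$ modulo $\ell$ is $\ell^{b(S)}m(S)$, matching the formula at $N=1$.

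For general $N$ I would again split $D=\ell^{f}v$ with $\gcd(v,\ell)=1$ and show the two conditions decouple. Localizing at a primitive $c$th root of unity with $\ell\nmid c$, the corresponding factor of $Q$ is separable, so the tame requirement is unchanged from the $N=1$ analysis and remains exactly $m(S)\mid v$, independent of $N$. The $\ell$-power exponent $f$ is where the dependence on $N$ enters: localizing at $q=1$ (and, by the same reasoning, at roots of unity of $\ell$-power order), the factor $1-q^{s}$ equals $(1-q)V_{s}(q)$ with $V_{s}(1)=s$ of $\ell$-adic valuation $e_{s}$, and the requirement that $1-q^{D}$ be divisible modulo $\ell^{N}$ by this high-multiplicity local factor forces the successive Hasse derivatives of $1-q^{D}$ at $q=1$ to vanish modulo $\ell^{N}$. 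Tracking $\ell$-adic valuations then shows that each increment of the modulus from $\ell^{j}$ to $\ell^{j+1}$ multiplies the minimal admissible $\ell^{f}$ by exactly $\ell$; equivalently, iterating the congruence above produces an explicit factorization $1-q^{\ell^{N-1+b(S)}m(S)}\equiv Q(q)\cdot(\text{poly})\pmod{\ell^{N}}$ for the upper bound, while the valuation obstruction rules out any smaller $f$. Combining the tame condition $v\ge m(S)$ with the wild condition $f\ge N-1+b(S)$ yields $\pi_{\ell^{N}}(A)=\ell^{N-1+b(S)}m(S)$.

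The main obstacle is the wild, local analysis at $q=1$ for $N\ge 2$. Modulo $\ell$ the field structure of $\mathbb{F}_{\ell}[q]$ makes the multiplicity count clean, but modulo $\ell^{N}$ the ring is no longer a unique factorization domain, and one must separate the honest multiplicity $|S|$ of $1-q$ in $Q$ over $\mathbb{Z}$ from the $\ell$-adic valuations $e_{s}$ of the unit factors $V_{s}(1)=s$; it is precisely their interaction that recombines into $\sum_{s}\ell^{e_{s}}$, and hence into $b(S)$, and that produces the extra factor $\ell^{N-1}$. Proving minimality—that no smaller $D$ works—is the delicate half: I expect to need a careful $\ell$-adic valuation estimate for the relevant coefficient of $(1-q^{D})/Q(q)$, or equivalently a sharp lower bound on the order of vanishing forced at $q=1$, rather than a divisibility argument that only delivers the upper bound.
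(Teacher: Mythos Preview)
The paper does not prove this theorem at all: it is quoted from Kwong's 1989 paper \cite{KWOPart} and used purely as a black box to obtain Corollary~\ref{corollary} and the explicit periods in~\eqref{periods}. There is therefore no proof in the paper to compare your proposal against.

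That said, your overall strategy is sound. Reducing pure periodicity of $A(q)=1/Q(q)$ modulo $\ell^{N}$ to the divisibility $Q(q)\mid 1-q^{D}$ in $(\mathbb{Z}/\ell^{N}\mathbb{Z})[q]$ is exactly the right framework, and your $N=1$ argument---via the separable factorization of $1-q^{v}$ over $\mathbb{F}_{\ell}$ together with the observation that the multiplicity of the factor $1-q$ in $Q$ dominates all others---is clean and correct. The lifting congruence $(1-q^{w})^{\ell^{a}}\equiv (1-q^{w\ell})^{\ell^{a-1}}\pmod{\ell^{a}}$ is indeed the right tool for the upper bound at general $N$.

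The genuine gap is the one you yourself flag: minimality for $N\ge 2$. Over $\mathbb{Z}/\ell^{N}\mathbb{Z}$ you lose unique factorization, and the appeal to ``Hasse derivatives'' and ``tracking $\ell$-adic valuations'' is a description of what needs to happen rather than an argument that it does. Turning this into a proof requires either an explicit coefficient of $A(q)$ whose $\ell$-adic valuation you can compute exactly, or an inductive reduction in $N$ that controls both directions simultaneously; this is precisely the substantive content of Kwong's paper that the present paper simply imports. So your proposal is on the right track, with a complete argument for $N=1$ (which is in fact all the present paper ever uses), but the hard half for $N\ge 2$ is currently only a plan.
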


We can use Theorem \ref{kwong} to decompose the generating function $PL_k(q)$ for $k$-component plane partitions. Recall from \eqref{PL} that 
\[
PL_k(q)=F_k(q)\cdot \Prod{n}{k}{\infty}\frac{1}{(1-q^n)^{k}},
\]
where $F_1(q)=1$, and for $k\geq 2$,
\[
F_k(q):=\Prod{n}{1}{k-1}\frac{1}{(1-q^n)^n} = [1 + q^1 + q^{1+1} + \cdots][1 + q^2 + q^{2+2} + \cdots]^2 \cdots [1 + q^{k-1} + \cdots]^{k-1}.
\]
We observe that $F_k(q)$ generates partitions into parts $1 \leq i \leq k-1$, where each part $i$ can be one of $i$ colors.  Define the multiset $S_{k}$ to contain $i$ colorings of each positive integer $i<k$, i.e., 
\[
S_{k}:=\{i_j|1\leq i \leq k-1,\, 1\leq j \leq i\}.
\]
Then,
\begin{equation}\label{coeffsF_k}
\sum_{n\geq0}p(n;\,S_{k})q^n=F_{k}(q).
\end{equation}
This gives the following  immediate corollary of Theorem \ref{kwong}.
\begin{corollary}\label{corollary}
Let $\ell$ be prime, and $k,N$ positive integers.  Then the series $F_{k}(q)$ is periodic modulo $\ell^N$ with minimal period $\pi_{\ell^N}(F_{k})=\ell^{N+b(S_{k}) -1}\cdot m(S_{k})$.
\end{corollary}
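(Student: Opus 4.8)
The plan is to recognize Corollary~\ref{corollary} as nothing more than Kwong's Theorem~\ref{kwong} applied to the single multiset $S_k$, so that the only real work is the combinatorial identification of $F_k(q)$ as a colored restricted-partition generating function, namely \eqref{coeffsF_k}. First I would establish that identity directly from the product definition. For $k\geq 2$, writing
\[
F_k(q)=\Prod{n}{1}{k-1}\frac{1}{(1-q^n)^n}=\Prod{n}{1}{k-1}\Prod{j}{1}{n}\left(\sum_{m\geq 0}q^{nm}\right),
\]
I would read the $j$-th factor $\sum_{m\geq 0}q^{nm}$ as contributing arbitrarily many parts of size $n$ painted in the $j$-th of $n$ available colors. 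Collecting contributions across all $1\leq n\leq k-1$ and all $1\leq j\leq n$, the coefficient of $q^N$ counts exactly the partitions of $N$ into parts drawn from the multiset $S_k=\{n_j\mid 1\leq n\leq k-1,\ 1\leq j\leq n\}$, each colored integer serving as one available part type. By the definition of the colored restricted-partition function this coefficient is $p(N;S_k)$, which is \eqref{coeffsF_k}.

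With the identity $F_k(q)=\sum_{n\geq 0}p(n;S_k)q^n$ in hand, I would verify that $S_k$ is a legitimate input for Theorem~\ref{kwong}: it is a finite multiset of positive integers, in which each integer $n$ with $1\leq n\leq k-1$ occurs with multiplicity $n$. Since the hypotheses hold, Theorem~\ref{kwong} applies verbatim with $S=S_k$, giving that $F_k(q)$ is periodic modulo $\ell^N$ with minimal period
\[
\pi_{\ell^N}(F_k)=\ell^{\,N+b(S_k)-1}\cdot m(S_k),
\]
which is precisely the asserted formula. No values of $b(S_k)$ or $m(S_k)$ need to be computed here, since the statement records the period symbolically in terms of these quantities.

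The only point demanding care --- and hence the main obstacle, modest as it is --- is reconciling the two meanings of ``parts'': the colors of an integer $n$ in the geometric expansion of $(1-q^n)^{-n}$ must be matched with repetitions of $n$ as an element of the multiset $S_k$. This bookkeeping is what makes the quantities in Theorem~\ref{kwong} the correct ones: the colorings inflate the multiplicity of $n$, and so feed into the sum $\sum_{n\in S_k}\ell^{\mathrm{ord}_\ell(n)}$ defining $b(S_k)$, while leaving $\mathrm{lcm}\{n\mid n\in S_k\}$ and hence $m(S_k)$ unchanged. Once the identification \eqref{coeffsF_k} is secured the corollary is immediate; the trivial boundary case $k=1$, where $S_1$ is empty and $F_1=1$, can be inspected separately.
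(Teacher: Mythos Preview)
Your proposal is correct and follows essentially the same approach as the paper: identify $F_k(q)$ with the restricted-partition generating function $\sum_{n\geq 0}p(n;S_k)q^n$ via the colored-parts interpretation (which is exactly \eqref{coeffsF_k} in the text), and then invoke Kwong's Theorem~\ref{kwong} with $S=S_k$. The paper simply states the corollary as ``immediate'' from Theorem~\ref{kwong} and \eqref{coeffsF_k}; your write-up supplies more of the combinatorial bookkeeping behind \eqref{coeffsF_k}, but the argument is the same.
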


As a consequence of Corollary \ref{corollary},  \eqref{coeffsF_k} gives that for all $n\geq 0$,
\[
p(n+ \pi_{\ell^N}(F_{k});S_k) \equiv p(n;S_k) \pmod{\ell^N}, 
\]
so for any $n,m\geq 0$ with $n\equiv m \pmod{\pi_{\ell^N}(F_{k})}$, we have
\begin{equation}\label{periodicity}
p(n;S_k) \equiv p(m;S_k) \pmod{\ell^N}.
\end{equation}

%Thus for any prime $\ell$, and positive integers $k,N$, 
%\begin{equation}\label{F}
%F_{k}(q)\equiv \Sum{i}{0}{\infty}\Sum{n}{0}{\pi_{\ell^N}(F_{k})-1} p(n;S_k) q^{\pi_{\ell^N}(F_{k})i+n} \pmod {\ell^N}.
%\end{equation}

Taking $k=\ell$, we can easily calculate the minimum period $\pi_{\ell^{N}}(F_\ell)$. Notice that by definition of $S_\ell$, $\rm{ord}_{\ell}(n)=0$ for any $n\in S_\ell$.  Thus,
\[
\sum_{n\in S_\ell}\ell^{\rm{ord}_{\ell}(n)}= | S_\ell | = \frac{\ell(\ell-1)}{2},
\] 
and $b(S_\ell)$ is the least nonnegative integer such that $\ell^2-\ell \leq 2\ell^{b(S_{\ell})}$.  We see  then that for any $\ell$, $0\leq b(S_\ell)\leq 2$.  We immediately compute that $b(S_2)=0$, $b(S_3)=1$, and $b(S_\ell)=2$ for $\ell\geq 5$.  Also, by construction of $S_\ell$, the $\ell$-free part of $\rm{lcm}\{n\mid n\in S_\ell\}$ is $m(S_\ell)=\rm{lcm}\{1,2,\ldots, \ell -1\}$.  Thus for any positive integer $N$,
\begin{equation}\label{periods}
\pi_{\ell^N}(F_\ell) = \begin{cases}
     2^{N-1} & \text{ if } \ell=2, \\
     3^{N}\cdot2 & \text{ if } \ell=3, \\
     \ell^{N+1}\cdot \rm{lcm}\{1,2,\ldots, \ell -1\} & \text{ if } \ell \geq 5.
\end{cases}
\end{equation}

To conclude this section, we recall the following standard lemma which follows from the binomial series. 
\begin{lemma}\label{equivlemma}
If $\ell$ is prime, then for any positive integer $j$,
\[
(1-q^j)^\ell\equiv (1-q^{j\ell}) \pmod {\ell}.
\]
\end{lemma}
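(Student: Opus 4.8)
The plan is to prove Lemma~\ref{equivlemma} directly from the binomial theorem together with the elementary fact that $\ell \mid \binom{\ell}{i}$ for $1 \leq i \leq \ell-1$. First I would write out
\[
(1-q^j)^\ell = \sum_{i=0}^\ell \binom{\ell}{i}(-q^j)^i = \sum_{i=0}^\ell \binom{\ell}{i}(-1)^i q^{ij},
\]
which is valid as an identity in $\mathbb{Z}[q]$ (indeed in $\mathbb{Z}[[q]]$). The $i=0$ term contributes $1$ and the $i=\ell$ term contributes $(-1)^\ell q^{j\ell}$; all intermediate terms $1 \leq i \leq \ell-1$ carry the coefficient $\binom{\ell}{i}$.

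Next I would recall the standard divisibility $\ell \mid \binom{\ell}{i}$ for $1 \leq i \leq \ell-1$ when $\ell$ is prime; this follows because $i!\,(\ell-i)!\,\binom{\ell}{i} = \ell!$ and $\ell$ divides the right-hand side but not $i!(\ell-i)!$ (as $\ell$ is prime and both factorials involve only integers strictly less than $\ell$). Hence every intermediate term vanishes modulo $\ell$, giving
\[
(1-q^j)^\ell \equiv 1 + (-1)^\ell q^{j\ell} \pmod{\ell}.
\]
Finally I would split on the parity of $\ell$: if $\ell$ is odd then $(-1)^\ell = -1$ and we obtain $1 - q^{j\ell} = (1-q^{j\ell})$ directly; if $\ell = 2$ then $(-1)^\ell = 1$, but $1 + q^{2j} \equiv 1 - q^{2j} \pmod 2$ since $2 \mid (1-(-1)) = 2$, so the congruence still holds. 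In either case $(1-q^j)^\ell \equiv (1-q^{j\ell}) \pmod \ell$, as claimed.

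There is essentially no obstacle here: the only mild subtlety is remembering to treat $\ell = 2$ separately (or, more slickly, to note that $2q^{2j} \equiv 0$ so the sign is irrelevant mod $2$), and to be careful that the congruence of power series means congruence of every coefficient, which is immediate since the two sides are polynomials whose coefficients differ only in the terms already shown to be divisible by $\ell$. One could alternatively invoke the Frobenius/"freshman's dream" identity $(x+y)^\ell \equiv x^\ell + y^\ell \pmod \ell$ applied with $x = 1$, $y = -q^j$ in the ring $\mathbb{F}_\ell[q]$, which packages the binomial computation and the sign bookkeeping into one line; I would likely present the explicit binomial version for self-containedness.
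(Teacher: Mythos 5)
Your proof is correct and matches the paper's approach: the paper simply states that the lemma ``follows from the binomial series,'' and your argument is exactly that, spelled out via the divisibility $\ell \mid \binom{\ell}{i}$ for $1 \leq i \leq \ell-1$ together with the harmless sign discrepancy when $\ell = 2$.
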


\section{Proof of Theorem \ref{congtheorem}}\label{proof}

Fix a prime $\ell$.  For convenience we will write $\alpha_i := p(i; S_\ell)$, the coefficient of $q^i$ in $F_\ell(q)$.  From (\ref{PL}), \eqref{coeffsF_k}, and Lemma \ref{equivlemma}, we see that

\begin{equation}\label{mkmod}
PL_\ell(q)=\Sum{n}{0}{\infty}pl_{\ell}(n)q^n\equiv \left(\Sum{i}{0}{\infty} \alpha_i q^{i}\right)\left(\prod_{j=\ell}^{\infty}\frac{1}{(1-q^{j\ell })}\right) \pmod {\ell}.
\end{equation}
 The product $\prod_{j=\ell}^{\infty}\frac{1}{(1-q^{\ell j})}$ yields a series in $q$ with exponents which are all multiples of $\ell$. Thus we write 
\begin{equation}\label{betaform}
\prod_{j=\ell}^{\infty}\frac{1}{(1-q^{j \ell})}=\sum_{m\geq 0}\beta_m q^{m\ell}.
\end{equation} 
Notice that $\beta_m\in \mathbb{N}$, and $\beta_0=1$. 
For any $n\geq 0$, and $0\leq k <\ell$, \eqref{mkmod} and \eqref{betaform} give that
\begin{equation}\label{star}
pl_{\ell}(n\ell + k) \equiv \sum_{i=0}^n \alpha_{i\ell + k}\cdot \beta_{n-i}  \pmod{\ell}.
\end{equation}
%\begin{equation}\label{star}
%pl_{\ell}(n\ell + k) \equiv \sum_{i=0}^n \alpha_{i\ell + k}\cdot \beta_{n-i} \equiv \alpha_{n\ell + k} + \sum_{i=0}^{n-1} \alpha_{i\ell + k}\cdot \beta_{n-i}  \pmod{\ell}.
%\end{equation}

%\textcolor{red}{(We have $\alpha_0,\dots,\alpha_{\pi_\ell(F_\ell)}$ defined, but $\alpha_{i\ell+k}$ is not defined here if $i\ell+k$ is sufficiently large.)}

Notice that by (\ref{star}), for any $n\geq 0$, the congruence
\[
\sum_{i=1}^{s}pl_{\ell}(\ell n+a_i)\equiv \sum_{j=1}^{t}pl_{\ell}(\ell n+b_j) \pmod {\ell}
\]
is equivalent to the congruence
\[
\sum_{i=1}^{s} \sum_{r=0}^n \alpha_{r\ell+a_i} \beta_{n-r} \equiv \sum_{j=1}^{t} \sum_{r=0}^n \alpha_{r\ell +b_j}\beta_{n-r} \pmod{\ell},
\]
or in particular
\[
\sum_{r=0}^n \beta_{n-r} \left( \sum_{i=1}^{s} \alpha_{r\ell+a_i} \right)  \equiv  \sum_{r=0}^n \beta_{n-r} \left( \sum_{j=1}^{t}\alpha_{r\ell +b_j}\right) \pmod{\ell}.
\]
Thus to prove any congruence of the form in (\ref{congform}), it suffices to prove that for all $n\geq 0$, 
\[
\sum_{i=1}^{s} \alpha_{n\ell+a_i} \equiv \sum_{j=1}^{t}\alpha_{n\ell +b_j} \pmod{\ell}.
\]  

By the hypotheses of Theorem \ref{congtheorem}, we may assume that   for all $0\leq n< \pi_\ell(F_\ell)/\ell$, 
\[
\sum_{i=1}^{s}pl_{\ell}(\ell n+a_i)\equiv \sum_{j=1}^{t}pl_{\ell}(\ell n+b_j) \pmod {\ell},
\]
where $s,t$ are positive integers, and $0\leq a_i,\,b_j <\ell$.  Thus for all $0\leq n< \pi_\ell(F_\ell)/\ell$,
\begin{equation}\label{star'}
\sum_{r=0}^n \beta_{n-r} \left( \sum_{i=1}^{s} \alpha_{r\ell+a_i} \right)  \equiv  \sum_{r=0}^n \beta_{n-r} \left( \sum_{j=1}^{t}\alpha_{r\ell +b_j}\right) \pmod{\ell}.
\end{equation}
  Letting $n=0$, \eqref{star'} implies that $\sum_{i=1}^{s} \alpha_{a_i} \equiv \sum_{j=1}^{t}\alpha_{b_j} \pmod{\ell}$, and when $n\geq 1$,   
\begin{equation}\label{recursive}
\sum_{i=1}^{s} \alpha_{n\ell+a_i} + \sum_{r=0}^{n-1} \beta_{n-r} \left( \sum_{i=1}^{s} \alpha_{r\ell+a_i} \right)  \equiv \sum_{j=1}^{t}\alpha_{n\ell +b_j} +  \sum_{r=0}^{n-1} \beta_{n-r} \left( \sum_{j=1}^{t}\alpha_{r\ell +b_j}\right) \pmod{\ell}.
\end{equation}
We see recursively from (\ref{recursive}) that for all $0\leq n< \pi_\ell(F_\ell)/\ell$,
\begin{equation}\label{finitecong}
\sum_{i=1}^{s} \alpha_{n\ell+a_i} \equiv \sum_{j=1}^{t}\alpha_{n\ell +b_j} \pmod{\ell}.
\end{equation}
To finish the proof of Theorem \ref{congtheorem} it thus suffices to prove that \eqref{finitecong} holds for all $n\geq 0$.

%To finish the proof of Theorem \ref{congtheorem} it thus suffices to prove the following lemma. 

%\begin{lemma}
%Let $F_\ell(q) = \sum_{n\geq 0} \alpha_n q^n$ as above be periodic modulo a prime $\ell$ with minimal period $\pi_\ell(F_\ell)$.  If  the congruence  
%\[
%\sum_{i=1}^{s} \alpha_{n\ell+a_i} \equiv \sum_{j=1}^{t}\alpha_{n\ell +b_j} \pmod{\ell},
%\]  
%holds for all $0\leq n< \pi_\ell(F_\ell)/\ell$, then  the congruence   holds for all $n\geq 0$.
%\end{lemma}

%\begin{proof}
By  (\ref{periods}), we know that $\pi_2(F_2)=1$, and thus the coefficients of $F_2$ are all congruent modulo $2$.  By  (\ref{periods}), we see that $\pi_\ell(F_\ell)$ is a multiple of $\ell$ whenever $\ell >2$.  Thus, we write $\pi_\ell(F_\ell) = K\ell$ in this case,  for $K\in\mathbb{N}$.  Fix an arbitrary positive integer $n\geq \pi_\ell(F_\ell)/\ell$.  By the division algorithm, we can write $n = xK+y$ for $0\leq y<K$.  Thus,  for each $1\leq i \leq s$, and $1\leq j\leq t$, we have 
\begin{align*}
n\ell + a_i &= x\cdot \pi_\ell(F_\ell) + (y\ell + a_i)\\
n\ell + b_j &= x\cdot \pi_\ell(F_\ell) + (y\ell + b_j),
\end{align*}
 where $0\leq y<K$.     
From this we see that $n\ell + a_i \equiv y\ell + a_i \pmod{\pi_\ell(F_\ell)}$, and $n\ell + b_j \equiv y\ell + b_j \pmod{\pi_\ell(F_\ell)}$, and so by  \eqref{periodicity}, we must have that for each $1\leq i \leq s$, and $1\leq j\leq t$,
\begin{eqnarray*}
\alpha_{n\ell + a_i} &\equiv& \alpha_{y\ell + a_i} \pmod{\ell} \\ \alpha_{n\ell + b_j} &\equiv& \alpha_{y\ell + b_j} \pmod{\ell} .
\end{eqnarray*}
But $y\ell + a_i, y\ell + b_j < (K-1)\ell + \ell =\pi_\ell(F_\ell)$, which implies that $y<\pi_\ell(F_\ell)/\ell$.  Thus we have 
\[
\sum_{i=1}^{s} \alpha_{n\ell+a_i} \equiv \sum_{i=1}^{s} \alpha_{y\ell+a_i} \equiv \sum_{j=1}^{t}\alpha_{y\ell +b_j} \equiv \sum_{j=1}^{t}\alpha_{n\ell +b_j}\pmod{\ell},
\]
as desired.
%\end{proof}

\subsection{Known Congruences}

The following lists all congruences we have established using this theorem.

\begin{theorem}
The following hold for all $n\geq 0$. 
\begin{equation}\label{mod2case}
pl_2(2n+1)\equiv pl_2(2n) \pmod 2
\end{equation}

\begin{equation}\label{mod3case1}
pl_3(3n+2)\equiv 0 \pmod 3
\end{equation}
\begin{equation}\label{mod3case2}
pl_3(3n+1)\equiv pl_3(3n) \pmod 3
\end{equation}

\begin{equation}
pl_5(5n+2)\equiv pl_5(5n+4) \pmod 5
\end{equation}
\begin{equation}\label{mod5case2}
pl_5(5n+1)\equiv pl_5(5n+3) \pmod 5
\end{equation}

\begin{equation}\label{mod7case}
pl_7(7n+2)+pl_7(7n+3)\equiv pl_7(7n+4)+pl_7(7n+5) \pmod 7
\end{equation}
\end{theorem}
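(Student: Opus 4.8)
The plan is to observe that each of the six congruences is (or is trivially equivalent to) an instance of the congruence type \eqref{congform} with $k$ equal to the relevant prime $\ell\in\{2,3,5,7\}$, and then to invoke Theorem \ref{congtheorem}, which collapses each assertion to a finite verification.

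First I would cast each statement into the shape \eqref{congeqn}. For \eqref{mod2case} take $\ell=2$, $s=t=1$, $a_1=1$, $b_1=0$; for \eqref{mod3case2} take $\ell=3$, $a_1=1$, $b_1=0$; for the two $\ell=5$ congruences take $\ell=5$ with $(a_1,b_1)=(2,4)$ and $(a_1,b_1)=(1,3)$ respectively; for \eqref{mod7case} take $\ell=7$, $s=t=2$, $\{a_1,a_2\}=\{2,3\}$, $\{b_1,b_2\}=\{4,5\}$. The lone congruence to an identically-zero residue, \eqref{mod3case1}, fits after a cosmetic rewrite: modulo $3$, $pl_3(3n+2)\equiv 0$ is the same as $pl_3(3n+2)+pl_3(3n+2)\equiv pl_3(3n+2)$, i.e.\ \eqref{congeqn} with $\ell=3$, $s=2$, $t=1$ and all shifts equal to $2$; alternatively one notes that the derivation in Section \ref{proof} applies verbatim to one-sided congruences, since with $\beta_0=1$ the recursion behind \eqref{star} still lets one pass between $pl_\ell$ and the coefficients $\alpha_i$ of $F_\ell$. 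In every case the shifts $a_i,b_j$ lie in $\{0,1,\dots,\ell-1\}$, as the reduction in the proof of Theorem \ref{congtheorem} requires.

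Next I would record the finite bound $\pi_\ell(F_\ell)/\ell$ from \eqref{periods}: for $\ell=2$ we have $\pi_2(F_2)=1$ (indeed $F_2(q)=1/(1-q)$, all of whose coefficients are $1$), so \eqref{mod2case} is immediate; for $\ell=3$, $\pi_3(F_3)/3=2$; for $\ell=5$, $\pi_5(F_5)=25\cdot\operatorname{lcm}\{1,2,3,4\}=300$, so the bound is $60$; and for $\ell=7$, $\pi_7(F_7)=49\cdot\operatorname{lcm}\{1,\dots,6\}=2940$, so the bound is $420$. By Theorem \ref{congtheorem} it then suffices to verify each congruence for $0\leq n<\pi_\ell(F_\ell)/\ell$. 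Equivalently --- and more efficiently --- one may instead check the simpler finite system $\sum_{i}\alpha_{n\ell+a_i}\equiv\sum_{j}\alpha_{n\ell+b_j}\pmod{\ell}$ over the same range, where $\alpha_i=p(i;S_\ell)$ is the $q^i$-coefficient of $F_\ell(q)=\prod_{n=1}^{\ell-1}(1-q^n)^{-n}$; this is exactly the reduction to \eqref{finitecong} performed inside the proof of Theorem \ref{congtheorem}.

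The only genuine work is this finite verification, and the sole bottleneck is $\ell=7$: one must confirm $420$ instances of \eqref{mod7case}, equivalently expand $F_7(q)$ modulo $7$ through degree $2940$ and test the $\alpha$-identity. The $\ell=2,3$ cases are a line or two by hand, and $\ell=5$ is a short routine calculation. I would carry out the $\ell=5,7$ checks by machine, expanding $F_\ell(q)$ (or $PL_\ell(q)$) modulo $\ell$ to the required precision and confirming \eqref{finitecong} on the nose; once the table checks out, Theorem \ref{congtheorem} promotes it to all $n\geq 0$, completing the proof.
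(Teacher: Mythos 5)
Your proposal matches the paper's approach exactly: the paper establishes all six congruences by invoking Theorem \ref{congtheorem} and performing the finite verification up to $\pi_\ell(F_\ell)/\ell$ (illustrating only the $\ell=3$ case, where checking $pl_3(2)=3$ and $pl_3(5)=21$ suffices), with the larger cases checked computationally just as you describe. Your remark on recasting the one-sided congruence \eqref{mod3case1} as $2\,pl_3(3n+2)\equiv pl_3(3n+2)$ is a reasonable way to make it fit the stated form \eqref{congeqn}, which the paper glosses over.
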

\begin{remark}
The equivalences \eqref{mod2case}, \eqref{mod3case2}-\eqref{mod5case2} were known to Gandhi \cite{GAN67} but \eqref{mod3case1} and \eqref{mod7case} are previously unreported in the literature.
\end{remark}

\begin{remark}
To demonstrate an application of Theorem \ref{congtheorem}, we establish \eqref{mod3case1} explicitly. By \eqref{periods} it is sufficient to confirm the congruence for $n< \frac{\pi_3(F_3)}{3} = 2$. Indeed, immediate calculations yield
\begin{align*}
pl_3(2) &= 3\equiv 0 \pmod{3}, \\
pl_3(5) &=21 \equiv 0 \pmod{3}.
\end{align*} 
\end{remark}

\subsection{Congruences involving prime powers}

Note that Lemma \ref{equivlemma} allows us to write expansions of series with support on multiples of primes but that this representation fails for non-primes.  As such Theorem \ref{congtheorem} cannot readily be extended to congruences involving non-primes using the techniques presented in this work. However, using \eqref{mod2case} and some elementary techniques we are able to prove several congruences modulo prime powers.
\begin{theorem}\label{mod4equiv}
The following hold for all $n\geq 0$.
\[
pl_4(4n+1)\equiv pl_4(4n+2)+pl_4(4n+3) \pmod 4
\]
\[
pl_4(4n+3)\equiv 0 \pmod 2
\]
\[
pl_8(8n+5)\equiv pl_8(8n+6)\equiv pl_8(8n+7)\equiv 0\pmod 2
\]
\end{theorem}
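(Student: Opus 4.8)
Our plan rests on a factorization of the generating functions in \eqref{PL}. The exponent of $1/(1-q^n)$ in $PL_{2k}(q)$ is $\min(2k,n)$, while in $PL_k(q)^2$ it is $2\min(k,n)$; since $2\min(k,n)-\min(2k,n)=\min(n,2k-n)$ for $1\le n\le 2k-1$ and equals $0$ for $n\ge 2k$, we obtain
\[
PL_{2k}(q)=PL_k(q)^2\prod_{n=1}^{2k-1}(1-q^n)^{\min(n,\,2k-n)} .
\]
In particular $PL_4(q)=PL_2(q)^2(1-q)(1-q^2)^2(1-q^3)$ and $PL_8(q)=PL_4(q)^2\prod_{n=1}^{7}(1-q^n)^{\min(n,\,8-n)}$. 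The idea is to combine these identities with the congruence \eqref{mod2case} and with the fact that squaring a power series over $\mathbb{F}_2$ replaces $q$ by $q^2$, in order to reduce every claimed congruence to an identity of explicit polynomials.

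For the two congruences involving $pl_4$ we work modulo $4$. Write $PL_2(q)=P_0(q^2)+qP_1(q^2)$, where $P_0(u)=\sum_{m\ge 0}pl_2(2m)u^m$; by \eqref{mod2case} we have $P_1\equiv P_0\pmod 2$, so $PL_2(q)=(1+q)P_0(q^2)+2q\,D(q^2)$ for some $D\in\mathbb{Z}[[q]]$. Squaring and discarding multiples of $4$ gives $PL_2(q)^2\equiv(1+q)^2P_0(q^2)^2\pmod 4$, and hence
\[
PL_4(q)\equiv(1+q)^2(1-q)(1-q^2)^2(1-q^3)\,P_0(q^2)^2\pmod 4 .
\]
A short computation (using $(1-q^2)^2\equiv(1+q^2)^2\pmod 4$) shows that $(1+q)^2(1-q)(1-q^2)^2(1-q^3)\equiv(1+q+q^2)(1+q^4)^2\pmod 4$, so with $\Phi(u):=(1+u^2)P_0(u)$ we get $PL_4(q)\equiv(1+q+q^2)\,\Phi(q^2)^2\pmod 4$. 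Writing $\Phi(q^2)^2=\sum_{m\ge 0}\psi_m q^{2m}$ and reading off the coefficients in the residue classes $1,2,3$ modulo $4$ yields
\[
pl_4(4n+1)\equiv\psi_{2n},\quad pl_4(4n+2)\equiv\psi_{2n}+\psi_{2n+1},\quad pl_4(4n+3)\equiv\psi_{2n+1}\pmod 4 .
\]
Since $\Phi(q^2)^2\equiv\Phi(q^4)\pmod 2$, the coefficient $\psi_m$ is even whenever $m$ is odd; in particular $\psi_{2n+1}$ is even, which at once gives $pl_4(4n+3)\equiv 0\pmod 2$ and $pl_4(4n+1)-pl_4(4n+2)-pl_4(4n+3)\equiv-2\psi_{2n+1}\equiv 0\pmod 4$.

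For the three congruences involving $pl_8$ we work modulo $2$. Using $PL_4(q)^2\equiv PL_4(q^2)$, $PL_4(q)\equiv PL_2(q^2)(1+q)(1+q^3)(1+q^4)$ (from $PL_2(q)^2\equiv PL_2(q^2)$ and $(1-q^2)^2\equiv 1+q^4$), and reducing each factor $(1-q^n)^{\min(n,8-n)}$ modulo $2$ by Lemma \ref{equivlemma} and collecting terms, we obtain
\[
PL_8(q)\equiv PL_2(q^4)\,B(q)\,(1+q^8)(1+q^{16})(1+q^{24})\pmod 2 ,
\]
where $B(q)=(1+q)(1+q^2)(1+q^3)(1+q^4)(1+q^5)(1+q^7)(1+q^{10})$. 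Put $R(q):=B(q)(1+q^8)(1+q^{16})(1+q^{24})$. For $r\in\{5,6,7\}$ and $a:=r-4\in\{1,2,3\}$, extracting coefficients from $PL_8(q)\equiv PL_2(q^4)R(q)\pmod 2$ gives
\[
pl_8(8n+r)\equiv\sum_{\ell\ge 0}\big([q^{4\ell+a}]R\big)\,pl_2(2n+1-\ell)\pmod 2 .
\]
Applying \eqref{mod2case} in the form $pl_2(2m+1)\equiv pl_2(2m)\pmod 2$ and collapsing the sum according to the parity of $\ell$, the vanishing of $pl_8(8n+r)$ for all $n$ reduces to $[q^{8j+a}]R\equiv[q^{8j+a+4}]R\pmod 2$ for all $j\ge 0$; and since $(1+q^8)(1+q^{16})(1+q^{24})$ only shifts exponents by multiples of $8$, this further reduces to $[q^{8j+a}]B\equiv[q^{8j+a+4}]B\pmod 2$ for all $j\ge 0$. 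As $\deg B=32$ this is a finite verification: expanding
\[
B(q)\equiv 1+q+q^2+q^5+q^6+q^{11}+q^{15}+q^{17}+q^{21}+q^{26}+q^{27}+q^{30}+q^{31}+q^{32}\pmod 2
\]
and comparing exponents in the three relevant pairs of residue classes modulo $8$ confirms it.

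The step we expect to be the main obstacle is the lift from modulo $2$ to modulo $4$ for the $pl_4$ congruences: one must control the cross terms produced when $PL_2(q)$ is squared --- this is precisely where \eqref{mod2case} enters, guaranteeing that $PL_2(q)^2-(1+q)^2P_0(q^2)^2$ is divisible by $4$ --- and then one must rewrite $(1+q)^2(1-q)(1-q^2)^2(1-q^3)$ modulo $4$ in the form $(1+q+q^2)(1+q^4)^2$, which is exactly the shape that combines with the square $P_0(q^2)^2$ to expose the parity of the coefficients in the arithmetic progressions of interest. Granting that, the $pl_8$ congruences come down to routine coefficient bookkeeping and one explicit polynomial expansion.
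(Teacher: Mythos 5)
Your proof is correct --- I checked the key identities: the factorization $PL_{2k}(q)=PL_k(q)^2\prod_{n=1}^{2k-1}(1-q^n)^{\min(n,2k-n)}$, the congruence $(1+q)^2(1-q)(1-q^2)^2(1-q^3)\equiv(1+q+q^2)(1+q^4)^2\pmod 4$, the reduction of $PL_8(q)$ to $PL_2(q^4)B(q)(1+q^8)(1+q^{16})(1+q^{24})$ modulo $2$, your expansion of $B(q)$ modulo $2$, and the matching of its exponents in the residue classes $\{1,5\},\{2,6\},\{3,7\}$ modulo $8$ --- but it takes a genuinely different route from the paper. The paper never factors $PL_{2k}$ through $PL_k^2$; instead it applies Lemma \ref{equivlemma} and Lemma \ref{mod4equivlemma} directly to the tail $\prod_{n\geq k}(1-q^n)^{-k}$ of \eqref{PL}, obtaining $PL_4(q)\equiv(1+q)^3(1-q)^2(1-q^3)\sum pl_2(n)q^{2n}\pmod 4$ for the first congruence (finishing, as you do, with \eqref{mod2case}), and for the second and third congruences reducing modulo $2$ to an explicit polynomial times a series supported on multiples of $4$ (resp.\ $8$), so that those two become direct parity checks of polynomial coefficients that do not use \eqref{mod2case} at all; for $pl_8$ this means checking a polynomial of degree $84$, namely $(1-q)^7(1-q^2)^6\cdots(1-q^7)$. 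Your approach leans on \eqref{mod2case} throughout, which buys you two things: the $pl_4(4n+3)$ congruence falls out as a byproduct of the same mod-$4$ computation (via the evenness of $\psi_{2n+1}$, from $\Phi(u)^2\equiv\Phi(u^2)\pmod 2$) rather than requiring a separate calculation, and the $pl_8$ verification shrinks to the degree-$32$ polynomial $B$; the cost is a longer chain of reductions (even/odd splitting of $PL_2$, control of cross terms mod $4$, parity collapsing of the convolution sum) where the paper's argument, while computationally heavier at the final step, is more self-contained and mechanical.
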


Before proving Theorem \ref{mod4equiv}, we state the following lemma, which follows from Lemma \ref{equivlemma} due to the fact that $(1-q^j)^2 - (1-q^{2j}) \equiv (1-q^j)^2 + (1-q^{2j}) \pmod{2}$. 
\begin{lemma}\label{mod4equivlemma}
For any positive integer $j$,
\[
(1-q^j)^4\equiv (1-q^{2j})^2 \pmod {4}.
\] 
\end{lemma}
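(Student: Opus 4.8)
The plan is to deduce Lemma~\ref{mod4equivlemma} directly from Lemma~\ref{equivlemma} by a difference-of-squares factorization, exploiting the fact that a congruence modulo $2$ between two quantities forces a congruence modulo $4$ between their squares. Write $A := (1-q^j)^2$ and $B := (1-q^{2j})$, viewed as elements of the polynomial ring $\mathbb{Z}[q]$ (equivalently $\mathbb{Z}[[q]]$). The target congruence is $A^2 \equiv B^2 \pmod 4$, and since $A^2 - B^2 = (A-B)(A+B)$, it suffices to show that both $A - B$ and $A + B$ lie in $2\mathbb{Z}[q]$.

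First I would invoke Lemma~\ref{equivlemma} with $\ell = 2$, which gives $A = (1-q^j)^2 \equiv (1-q^{2j}) = B \pmod 2$; equivalently $A - B = 2g(q)$ for some $g \in \mathbb{Z}[q]$. Next, the elementary observation quoted just before the lemma — that $A - B$ and $A + B$ differ by $2B$, and hence are congruent modulo $2$ — shows $A + B \equiv A - B \equiv 0 \pmod 2$, so $A + B = 2h(q)$ for some $h \in \mathbb{Z}[q]$. (Concretely $A + B = 2(1-q^j)$, which is manifestly even, so one need not even cite the observation.) Multiplying, $A^2 - B^2 = (A-B)(A+B) = 4\,g(q)h(q)$, which is divisible by $4$ in $\mathbb{Z}[q]$; this is precisely the assertion $(1-q^j)^4 \equiv (1-q^{2j})^2 \pmod 4$.

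There is essentially no obstacle: the only mild point of care is to work in $\mathbb{Z}[q]$ (or $\mathbb{Z}[[q]]$) so that ``divisible by $2$'' and ``divisible by $4$'' carry their naive coefficientwise meaning, after which the factorization does all the work. I would emphasize that this is the exact mechanism — upgrading a mod-$2$ identity among products of cyclotomic factors to a mod-$4$ identity by squaring — that powers the subsequent proof of Theorem~\ref{mod4equiv}, where one applies this lemma to the factors appearing in $PL_4(q)$ and $PL_8(q)$ and then reads off the stated dissection congruences.
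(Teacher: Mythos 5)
Your argument is correct and is exactly the mechanism the paper intends: it derives the lemma from Lemma~\ref{equivlemma} with $\ell=2$ together with the observation that $(1-q^j)^2-(1-q^{2j})$ and $(1-q^j)^2+(1-q^{2j})$ are both even, so their product $(1-q^j)^4-(1-q^{2j})^2$ is divisible by $4$. Your write-up simply fills in the details the paper leaves as a one-line remark (including the explicit check $A+B=2(1-q^j)$), so it matches the paper's approach.
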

\begin{proof}[Proof of Theorem \ref{mod4equiv}]
To prove the first congruence of Theorem \ref{mod4equiv} we write the power series of $PL_4$ in terms of $PL_2$. To that end, note that by Lemma \ref{mod4equivlemma}
\begin{align*}
\sum_{n=0}^\infty pl_4(n)q^n &\equiv \frac{1}{(1-q)(1-q^2)^2(1-q^3)^3}\prod_{i=4}^\infty \frac{1}{(1-q^{2i})^2} \pmod 4 \\
& =  \frac{(1-q^4)^2(1-q^6)^2}{(1-q)(1-q^3)^3} \prod_{i=1}^\infty \frac{1}{(1-q^{2i})^2} \pmod 4\\
&\equiv  \frac{(1-q^2)^4(1-q^3)}{(1-q)} \prod_{i=1}^\infty \frac{1}{(1-q^{2i})^2} \pmod 4.
\end{align*}
Factoring $(1-q^2)$ and regrouping we have
\begin{align*}
\sum_{n=0}^\infty pl_4(n)q^n &\equiv (1+q)^3(1-q)^2(1-q^3)\left[ (1-q^2)\prod_{i=1}^\infty \frac{1}{(1-q^{2i})^2}\right] \pmod 4 \\
&\equiv  (1+q)^3(1-q)^2(1-q^3)  \left[ \sum_{n=0}^\infty pl_2(n) q^{2n}\right] \pmod 4.
\end{align*}
We explicitly compute $(1+q)^3(1-q)^2(1-q^3)$  to obtain
\[
\sum_{n=0}^\infty pl_4(n)q^n \equiv (1+q+2q^2+q^3+3q^5+2q^6+3q^7+3q^8)  \sum_{n=0}^\infty pl_2(n) q^{2n}  \pmod 4 .
\]
Now it is clear that
\[
pl_4(4n+1)\equiv pl_2(2n) + pl_2(2n-1)+3pl_2(2n-2)+3pl_2(2n-3) \pmod 4
\]
\[
pl_4(4n+2) \equiv pl_2(2n+1)+2pl_2(2n)+2pl_2(2n-2)+3pl_2(2n-3) \pmod 4 
\]
and
\[
pl_4(4n+3)\equiv pl_2(2n+1)+pl_2(2n)+3pl_2(2n-1)+3pl_2(2n-2) \pmod 4.
\]
Using the congruence \eqref{mod2case} we conclude that
 \begin{multline*}
pl_4(4n+1)-pl_4(4n+2)-pl_4(4n+3) \equiv \\
-2(pl_2(2n)+pl_2(2n+1))-2(pl_2(2n-1)+pl_2(2n-2)) \equiv 0 \pmod 4,
\end{multline*} 
which establishes the first congruence of Theorem \ref{mod4equiv}.

To prove the second congruence, we use Lemma \ref{equivlemma} to write
\begin{align*}
\sum_{n=0}^\infty pl_4(n)q^n &= \frac{1}{(1-q)(1-q^2)^2(1-q^3)^3}\prod_{i=4}^\infty \frac{1}{(1-q^i)^4} \\
&\equiv \frac{(1-q^4)(1-q^8)(1-q^{12})}{(1-q)(1-q^2)^2(1-q^3)^3}\prod_{i=1}^\infty \frac{1}{1-q^{4i}} \pmod 2 \\
&\equiv \frac{(1-q)^4(1-q^2)^4(1-q^3)^4}{(1-q)(1-q^2)^2(1-q^3)^3}\prod_{i=1}^\infty \frac{1}{1-q^{4i}} \pmod 2 \\
&\equiv (1-q)^3(1-q^2)^2(1-q^3) \prod_{i=1}^\infty \frac{1}{1-q^{4i}} \pmod 2.
\end{align*}
Since the product $\prod_{i=1}^\infty\frac{1}{1-q^{4i}}$ yields a series in $q$ with exponents which are multiples of $4$, to prove the congruence it suffices to explicitly compute the coefficients of $(1-q)^3(1-q^2)^2(1-q^3)$ from terms of the form $q^{4j+3}$. Indeed, the relevant terms are $4q^3+4q^7$, which are both congruent to $0$ modulo 2, establishing the congruence. \\
The final congruence is proved in a similar way
\begin{align*}
\sum_{n=0}^\infty pl_8(n) q^n &\equiv \frac{1}{(1-q)\cdots (1-q^7)^7} \prod_{i=8}^\infty \frac{1}{1-q^{8i}} \pmod 2 \\
& \equiv \frac{(1-q^8)(1-q^{16})\cdots (1-q^{56})}{(1-q)\cdots (1-q^7)^7} \prod_{i=1}^\infty \frac{1}{1-q^{8i}} \pmod 2 \\
&\equiv (1-q)^7(1-q^2)^6(1-q^3)^5\cdots (1-q^7) \prod_{i=1}^\infty \frac{1}{1-q^{8i}} \pmod 2 .
\end{align*}
 Again, explicit calculation of the coefficients of  $(1-q)^7(1-q^2)^6(1-q^3)^5\cdots (1-q^7)$ shows that the coefficients of $q^{8n+5}$, $q^{8n+6}$, and $q^{8n+7}$ are all even, which  yields the desired result.

\end{proof}

\subsection{Plane partition congruences related to multipartition congruences}\label{mod5section}

Here we prove the following two congruences of a different flavor, where the modulus and number of components are relatively prime.   
\begin{theorem}\label{mod5equiv}
The following hold for all $n\geq 0$.
\[
pl_2(5n+3) \equiv 0 \pmod 5 
\]
\[
pl_2 (5n+4) \equiv 0 \pmod 5.
\]
\end{theorem}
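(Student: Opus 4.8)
The plan is to follow the route advertised in the abstract: express $pl_2$ through a classical multipartition function and transfer its known behavior modulo $5$. Since $\min(2,n)=1$ exactly when $n=1$, \eqref{PL} gives
\[
\sum_{n\geq 0} pl_2(n)q^n = \frac{1}{1-q}\prod_{n\geq 2}\frac{1}{(1-q^n)^2} = (1-q)\prod_{n\geq 1}\frac{1}{(1-q^n)^2}.
\]
Writing $\sum_{n\geq 0}p_{-2}(n)q^n := \prod_{n\geq 1}(1-q^n)^{-2}$ for the number of partitions of $n$ into parts of two colors, this is precisely $pl_2(n)=p_{-2}(n)-p_{-2}(n-1)$ for all $n\geq 0$. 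So the two asserted congruences say exactly that $p_{-2}(5n+2)\equiv p_{-2}(5n+3)\equiv p_{-2}(5n+4)\pmod 5$, and both follow at once from the stronger classical statement that $p_{-2}(m)\equiv 0\pmod 5$ whenever $m\not\equiv 0,1\pmod 5$.

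This multipartition congruence is the crux; it is known (see, e.g., the circle of results around \cite{GAN67}), but since Lemma \ref{equivlemma} is already available I would give a short self-contained proof. From $(1-q^n)^5\equiv 1-q^{5n}\pmod 5$ one gets $\prod_{n\geq1}(1-q^n)^{-2}\equiv\prod_{n\geq1}(1-q^n)^3\cdot\prod_{n\geq1}(1-q^{5n})^{-1}\pmod 5$. Now apply Jacobi's identity $\prod_{n\geq1}(1-q^n)^3=\sum_{k\geq0}(-1)^k(2k+1)q^{k(k+1)/2}$: as $k$ ranges over a complete residue system modulo $5$ the triangular number $k(k+1)/2$ is congruent to $0,1,3,1,0$, so the residue $3\pmod 5$ is hit only by $k\equiv 2\pmod 5$, and for such $k$ the coefficient $2k+1$ is divisible by $5$. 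Hence $\prod_{n\geq1}(1-q^n)^3$ is supported modulo $5$ only on exponents $\equiv 0,1\pmod 5$; multiplying by the power series $\prod_{n\geq1}(1-q^{5n})^{-1}$, whose exponents are all multiples of $5$, does not change this, so $p_{-2}(m)\equiv 0\pmod 5$ unless $m\equiv 0,1\pmod 5$.

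Combining the two steps finishes the proof: $5n+2$ and $5n+3$ avoid the residues $0,1\pmod 5$, so $pl_2(5n+3)=p_{-2}(5n+3)-p_{-2}(5n+2)\equiv 0$, and similarly $pl_2(5n+4)=p_{-2}(5n+4)-p_{-2}(5n+3)\equiv 0\pmod 5$. (One could also skip $p_{-2}$ altogether: the same computation shows $\sum_{n\geq0} pl_2(n)q^n\equiv(1-q)\prod_{n\geq1}(1-q^n)^3\cdot\prod_{n\geq1}(1-q^{5n})^{-1}\pmod 5$ is supported on exponents $\equiv 0,1,2\pmod 5$, which is exactly the two congruences.) The only real content here is the support computation for the cube modulo $5$; the step I would be most careful with is the residue bookkeeping — in particular that the $(1-q)$ prefactor widens the support only from $\{0,1\}$ to $\{0,1,2\}$ and never reaches $3$ or $4$ modulo $5$.
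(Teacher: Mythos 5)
Your proposal is correct, and it shares the paper's first step: both write $\sum_{n\geq 0} pl_2(n)q^n = (1-q)\prod_{n\geq 1}(1-q^n)^{-2}$, hence $pl_2(n)=p_2(n)-p_2(n-1)$ (your $p_{-2}$ is the paper's two-component multipartition function $p_2$), reducing everything to the vanishing of $p_2(5n+2)$, $p_2(5n+3)$, $p_2(5n+4)$ modulo $5$. Where you diverge is in how that key input is obtained. The paper cites the Kiming--Olsson theorem (Theorem \ref{KimingOlson}) with $\ell=5$, checking that $\leg{8a+1}{5}\neq 1$ for $a=2,3,4$; you instead prove the needed congruence from scratch, using Lemma \ref{equivlemma} to write $\prod(1-q^n)^{-2}\equiv \prod(1-q^n)^3\cdot\prod(1-q^{5n})^{-1}\pmod 5$ and then Jacobi's identity $\prod(1-q^n)^3=\sum_{k\geq 0}(-1)^k(2k+1)q^{k(k+1)/2}$, observing that triangular numbers hit only the residues $0,1,3$ modulo $5$ and that the residue $3$ comes only from $k\equiv 2\pmod 5$, where $2k+1\equiv 0\pmod 5$. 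Your residue bookkeeping checks out (including the remark that the $(1-q)$ prefactor enlarges the support only to $\{0,1,2\}$ modulo $5$), so the argument is complete. The trade-off: the paper's route is shorter and situates the result inside a general classification of multipartition congruences, while yours is self-contained and elementary, at the cost of importing Jacobi's identity, which the paper otherwise never uses; your parenthetical variant even bypasses $p_2$ entirely, which is arguably the cleanest formulation of this particular theorem.
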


To prove this theorem we make use of a result of Kiming and Olsson \cite{KIM92} on multipartition congruences.  A {\it{$k$-component multipartition}} of a positive integer $n$ is defined to be a $k$-tuple of partitions $\lambda=(\lambda_1, \ldots, \lambda_k)$ such that $\sum_{i=1}^k|\lambda_i|=n$.  We note that some $\lambda_i$ may equal $\emptyset$.  

Let $p_k(n)$ count the number of $k$-component multipartitions of $n$, with the convention that $p_k(0)=1$.  Then $p_k(n)$ is generated by the $k$th power of the generating function for $p(n)$.  Namely,
\begin{equation*}
\sum_{n=0}^\infty p_k(n)q^n=\prod_{n=1}^\infty\frac{1}{(1-q^n)^k}.
\end{equation*}

\begin{theorem}[Kiming and Olsson \cite{KIM92}]\label{KimingOlson}
Let $\ell\geq 5$ be prime.  Then $p_{\ell - 3}(\ell n + a)\equiv 0 \pmod{\ell}$ for all $n\geq 0$ if and only if 
\[
\leg{8a+1}{\ell} \neq 1,
\]
where $\leg{\cdot}{\cdot}$ denotes the Legendre symbol.
\end{theorem}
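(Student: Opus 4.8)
The plan is to work entirely modulo $\ell$ with the generating function
\[
\sum_{n\geq 0} p_{\ell-3}(n)q^n = \Prod{n}{1}{\infty}\frac{1}{(1-q^n)^{\ell-3}},
\]
and to reduce the stated vanishing to counting square roots of $8a+1$ in $\mathbb{F}_\ell$. First I would write $(1-q^n)^{-(\ell-3)} = (1-q^n)^3 (1-q^n)^{-\ell}$ and apply Lemma \ref{equivlemma} to replace each $(1-q^n)^\ell$ by $(1-q^{n\ell})$, obtaining
\[
\sum_{n\geq 0} p_{\ell-3}(n)q^n \equiv \left(\Prod{n}{1}{\infty}(1-q^n)^3\right)\cdot\Prod{n}{1}{\infty}\frac{1}{1-q^{n\ell}} \pmod \ell.
\]
Next I would invoke Jacobi's classical identity $\prod_{n\geq 1}(1-q^n)^3 = \sum_{m\geq 0}(-1)^m(2m+1)q^{m(m+1)/2}$, so that the right-hand side becomes a theta-like series supported on triangular-number exponents, multiplied by the series $P(q):=\prod_{n\geq 1}(1-q^{n\ell})^{-1}$, whose exponents are all multiples of $\ell$ and whose constant term is $1$. (The hypothesis $\ell\geq 5$ guarantees $\ell-3\geq 2$ and that $2$ is invertible modulo $\ell$.)

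The key structural observation is that $P(q)$ is a unit in $(\mathbb{Z}/\ell\mathbb{Z})[[q]]$ supported on exponents $\equiv 0 \pmod \ell$. Extracting the part of the product supported on exponents $\equiv a \pmod \ell$ therefore factors as $T_a(q)\cdot P(q)$, where
\[
T_a(q) := \sum_{\substack{m\geq 0 \\ m(m+1)/2 \equiv a \,(\mathrm{mod}\,\ell)}}(-1)^m(2m+1)q^{m(m+1)/2}.
\]
Because $P(q)$ is invertible modulo $\ell$, the condition $p_{\ell-3}(\ell n+a)\equiv 0 \pmod\ell$ for all $n\geq 0$ is equivalent to $T_a(q)\equiv 0\pmod\ell$. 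Since distinct $m$ give distinct triangular-number exponents, no cancellation can occur among the monomials of $T_a$, so $T_a\equiv 0$ holds precisely when $2m+1\equiv 0\pmod\ell$ for every integer $m\geq 0$ with $m(m+1)/2\equiv a\pmod\ell$; and since both conditions depend only on $m$ modulo $\ell$, this is a statement about residue classes $m\in\mathbb{F}_\ell$.

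Finally I would complete the square. Multiplying $m(m+1)/2\equiv a$ by $8$ gives $(2m+1)^2\equiv 8a+1\pmod\ell$, and as $m$ runs over $\mathbb{F}_\ell$ the quantity $u=2m+1$ runs over all of $\mathbb{F}_\ell$ (using that $\ell$ is odd). Hence the condition ``every $m$ with $m(m+1)/2\equiv a$ satisfies $2m+1\equiv 0$'' becomes ``every square root $u$ of $8a+1$ in $\mathbb{F}_\ell$ equals $0$''. A three-way case analysis on $\leg{8a+1}{\ell}$ then finishes the argument: if the symbol equals $1$ there are two nonzero roots, so the condition fails; if it equals $-1$ there are no roots, so it holds vacuously; and if $\ell\mid 8a+1$ the only root is $0$, so it holds. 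Thus $p_{\ell-3}(\ell n+a)\equiv 0\pmod\ell$ for all $n\geq 0$ exactly when $\leg{8a+1}{\ell}\neq 1$. I expect the main obstacle to be the careful bookkeeping in the middle step: justifying that vanishing on the entire progression collapses to the coefficientwise vanishing of $T_a$ (via the unit $P(q)$ together with the distinctness of the triangular numbers), and keeping the quantifier ``for all $n$'' correctly aligned with a purely residue-theoretic condition on $m$.
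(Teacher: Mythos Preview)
Your argument is correct. The paper itself does not prove this theorem; it merely quotes it from Kiming and Olsson \cite{KIM92} and then applies the special case $\ell=5$ to deduce Theorem~\ref{mod5equiv}. So there is no ``paper's own proof'' to compare against.

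That said, what you have written is essentially the classical route to this result: reduce $\prod(1-q^n)^{-(\ell-3)}$ modulo $\ell$ via Lemma~\ref{equivlemma} to $\prod(1-q^n)^3\cdot\prod(1-q^{n\ell})^{-1}$, invoke Jacobi's identity for the cube of Euler's product, exploit that the second factor is a unit in $(\mathbb{Z}/\ell\mathbb{Z})[[q^\ell]]$ to reduce to vanishing of $T_a$, and finish by the substitution $u=2m+1$, $u^2\equiv 8a+1$. Each step is sound; in particular your justification that $T_a\cdot P\equiv 0\Leftrightarrow T_a\equiv 0$ (because $P$ has an inverse supported on multiples of $\ell$) and your observation that distinct nonnegative $m$ give distinct triangular numbers are exactly the points that need care, and you handle them cleanly. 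The only cosmetic remark is that the hypothesis $\ell\geq 5$ is used not to ensure $\ell-3\geq 2$ (the argument works formally for any exponent congruent to $-3$), but simply because for $\ell=2,3$ the Jacobi coefficients $2m+1$ can vanish mod $\ell$ in uninteresting ways and the Legendre-symbol formulation degenerates.
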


We are now able to prove Theorem \ref{mod5equiv}.
\begin{proof}
As an immediate consequence of Theorem \ref{KimingOlson}, we have for $\ell=5$ that 
\begin{equation}\label{pcase}
p_2(5n+2) \equiv p_2(5n+3) \equiv p_2(5n+4) \equiv 0 \pmod{5}.
\end{equation}
Now for the plane partitions, we have
\begin{align*}
\sum_{n=0}^\infty pl_2(n)q^n &= \frac{1}{(1-q)}\prod_{i=2}^\infty \frac{1}{(1-q^i)^2} \\
&= (1-q) \sum_{n=0}^\infty p_2(n)q^n \\
&= p_2(0) + \sum_{n=1}^\infty (p_2(n) - p_2(n-1))q^n.\\
\end{align*}
Thus, we have by \eqref{pcase} that for all $n\geq 0$,
\begin{align*}
pl_2(5n+4) &= p_2(5n+4) - p_2(5n+3) \equiv 0 \pmod{5},\\
pl_2(5n+3) &= p_2(5n+3) - p_2(5n+2) \equiv 0 \pmod{5},
\end{align*}
as desired.
\end{proof}

\section{Conclusion}

We have  established a new method to confirm $k$-component plane partitions based on a bounded number of calculations. In particular Theorem \ref{congtheorem} gave rise to new congruences \eqref{mod3case1} and \eqref{mod7case}. It is clear that these results are limited by the computational capabilities available. As such, we  would hope that our results can be further simplified by finding more truncated and feasible forms of the generating functions, or more powerful tools to deal with what is the essential problem of classifying the coefficients of $F_\ell(q)$.  Generalizing Lemma \ref{equivlemma} as in Lemma \ref{mod4equivlemma} may also  provide a means to extend our proof to arbitrary prime powers. We proved several congruences involving prime powers but no general procedure is evident.  An interesting problem lies in whether any  additional congruences of the form \eqref{mod3case1} exist. Numerical calculations confirm that there exists  an  $n$ such that $pl_\ell(n\ell+\alpha)\not\equiv 0 \pmod{\ell}$ for all primes $\ell\leq 113$ and all $\alpha$. 

%\bibliographystyle{plain}
%\bibliography{mattbib}

\end{document}